\newtheorem*{rep@theorem}{\rep@title}
\newcommand{\newreptheorem}[2]{%
\newenvironment{rep#1}[1]{%
 \def\rep@title{#2 \ref{##1}}%
 \begin{rep@theorem}}%
 {\end{rep@theorem}}}
\newtheorem{theorem}{Theorem}
\newtheorem{lemma}{Lemma}
\newtheorem{corollary}{Corollary}
\newtheorem{definition}{Definition}
\newcommand{\norm}[1]{\left\Vert#1\right\Vert}
\newcommand{\set}[1]{\left\{#1\right\}}
\newcommand{\parr}[1]{\left (#1\right )}
\newcommand{\Real}{\mathbb R}
\newcommand{\too}{\rightarrow}
\newcommand{\argmin}{\textrm{argmin}}
\newcommand{\diag}{\textrm{diag}} 
\newcommand{\one}{\mathbf{1}}
\newcommand{\F}{\mathcal{F}}
\newcommand{\N}{\mathcal{N}}
\newcommand{\mA}{\mathcal{A}}
\newcommand{\mP}{\mathcal{P}}
\newcommand{\gm}{\mathrm{GM}} 
\newcommand{\ds}{\mathrm{DS}}
\newcommand{\hull}[1]{\mathrm{hull}({#1})} 
\newcommand{\aff}[1]{\mathrm{aff}({#1})} 
\newcommand{\lin}[1]{\mathrm{lin}({#1})} 
\newcommand{\ext}[1]{\mathrm{ext}({#1})} 
\newcommand{\eg}{{\it e.g.}}
\newcommand{\ie}{{\it i.e.}}
\newcommand{\rv}[1]{{\color{black} #1}}
\newcommand{\tensor}{\otimes}
\newcommand{\tr}{\mathrm{tr}}
\newcommand{\DS}{\mathrm{DS}}
\newcommand{\RR}{\mathbb{R}}
\newcommand{\EE}{\mathbb{E}}
\title{(Probably) Concave Graph Matching }
\author{
	Haggai Maron\\
	Weizmann Institute of Science\\
	Rehovot, Israel \\
	\texttt{haggai.maron@weizmann.ac.il} \\
	\And
	Yaron Lipman \\
	Weizmann Institute of Science\\
	Rehovot, Israel \\
	\texttt{yaron.lipman@weizmann.ac.il} \\
}
\date{}                     
\begin{document}
\maketitle	
\vspace{-18pt}
\begin{abstract} \vspace{-8pt}
In this paper, we address the graph matching problem. Following the recent works of \cite{zaslavskiy2009path,Vestner2017} we analyze and generalize the idea of concave relaxations. We introduce the concepts of \emph{conditionally concave} and \emph{probably conditionally concave} energies on polytopes and show that they encapsulate many instances of the graph matching problem, including matching Euclidean graphs and graphs on surfaces. We further prove that local minima of probably conditionally concave energies on general matching polytopes (\eg, doubly stochastic) are with high probability extreme points of the matching polytope (\eg, permutations). \vspace{-10pt}

\end{abstract}
	
	\section{Introduction}

	Graph matching is a generic and  popular modeling tool for problems in computational sciences such as computer vision \citep{berg2005shape,zhou2012factorized,rodola2013elastic,bernard2017tighter}, computer graphics \citep{funkhouser2006partial,kezurer2015tight}, medical imaging \citep{guo2013robust}, and machine learning \citep{umeyama1988eigendecomposition,huet1999graph,cour2007balanced}. In general, graph matching refers to several different optimization problems of the form: 
	\begin{equation}\label{e:gm}
		\min_X  ~  E(X)\quad \mathrm{s.t.}  \quad  X \in \F 
	\end{equation}
\vspace{-15pt}

	where $\F \subset \Real^{n\times n_0}$ is a collection of \emph{matchings} between vertices of two graphs $G_A$ and $G_B$, and $E(X)=[X]^T M [X] + a^T [X]$ is usually a quadratic function in $X\in\Real^{n\times n_0}$ ($[X]\in\Real^{nn_0\times 1}$ is its column stack). Often, $M$ quantifies the discrepancy between edge affinities exerted by the matching $X$. Edge affinities are represented by symmetric matrices $A\in\Real^{n\times n}$, $B\in\RR^{n_0\times n_0}$. 
	Maybe the most common instantiation of \eqref{e:gm} is 
		\begin{equation}\label{e:E1}
		E_1(X)=\norm{AX-XB}^2_F
	\end{equation}
	and $\F=\Pi_n$, the matrix group of $n\times n$ permutations. The permutations $X\in \Pi_n$ represent bijections between the set of ($n$) vertices of $G_A$ and the set of ($n$) vertices of $G_B$. We denote this problem as $\gm$. From a computational point of view, this problem is equivalent to the quadratic assignment problem, and as such is an NP-hard problem \citep{burkard1998quadratic}. A popular  way of obtaining approximate solutions is by relaxing its combinatorial constraints \citep{loiola2007survey}.

	A standard relaxation of this formulation (\eg~\cite{almohamad1993linear,Aflalo2015,fiori2015spectral}) is achieved by replacing $\Pi_n$ with its convex hull, namely the set of doubly-stochastic matrices $\ds = \hull{\F} = \set{X\in\Real^{n\times n} \ \vert \ X\one=\one, X^T\one=\one, X\geq 0}$. The main advantage of this formulation is the convexity of the energy $E_1$; the main drawback is that often the minimizer is not a permutation and simply projecting the solution onto $\Pi_n$ doesn't take the energy into account resulting in a suboptimal solution. The prominent Path Following algorithm \citep{zaslavskiy2009path} suggests a better solution of continuously changing $E_1$ to a concave energy $E'$ that coincide (up to an additive constant) with $E_1$ over the permutations. The concave energy $E'$ is called \emph{concave relaxation} and enjoys three key properties:
	(i) Its solution set is the same as the $\gm$ problem.  (ii) Its set of local optima are all permutations. This means no projection of the local optima onto the permutations is required. (iii) For every descent direction, a maximal step is always guaranteed to reduce the energy most.

\cite{dym2017ds++,bernard2017tighter} suggest a similar strategy but starting with a tighter convex relaxation. 
	Another set of works \citep{vogelstein2015fast,Lyzinski2016,Vestner2017,boyarski2017efficient} have considered the energy 
\begin{equation}\label{e:E2}
E_2(X)=-\tr(BX^TAX)	
\end{equation}	
 over the doubly-stochastic matrices, $\ds$,  as well. Note that both energies $E_1$, $E_2$ are identical (up to an additive constant) over the permutations and hence both are considered relaxations. However, in contrast to $E_1$, $E_2$ is in general indefinite, resulting in a non-convex relaxation. \cite{vogelstein2015fast,Lyzinski2016} suggest to locally optimize this relaxation with the Frank-Wolfe algorithm and motivate it by proving that for the class of $\rho$-correlated Bernoulli adjacency matrices $A,B$, the optimal solution of the relaxation almost always coincides with the (unique in this case) $\gm$ optimal solution. 
\cite{Vestner2017,boyarski2017efficient} were the first to make the useful observation that $E_2$ is \emph{itself a concave relaxation}  for some important cases of affinities such as heat kernels and Gaussians. This leads to an efficient local optimization using the Frank-Wolfe algorithm and specialized linear assignment solvers (\eg, \cite{bernard2016fast}). 

	In this paper, we analyze and generalize the above works and introduce the concepts of \emph{conditionally concave} and \emph{probably conditionally concave} energies $E(X)$. Conditionally concave energy $E(X)$ means that the restriction of the Hessian $M$ of the energy $E$ to the linear space 
	\begin{equation}
		\lin{\DS} = \set{X\in \Real^{n\times n} \ \vert \ X\one=0, X^T \one=0}
	\end{equation}
	is negative definite. Note that $\lin{\DS}$ is the linear part of the affine-hull of the doubly-stochastic matrices, denoted $\aff{\ds}$. We will use the notation $M\vert_{\lin{\DS}}$ to refer to this restriction of $M$, and consequently $M\vert_{\lin{\DS}}\prec 0$ means $v^T M v <0$, for all $0\ne v\in \lin{\DS}$. 
	Our first result is proving there is a large class of affinity matrices resulting in conditionally concave $E_2$. In particular, affinity matrices constructed using \emph{positive or negative definite functions}\footnote{In a nutshell, positive (negative) definite functions are functions that when applied to differences of vectors produce positive (negative) definite matrices when restricted to certain linear subspaces; this notion will be formally introduced and defined in Section \ref{s:conditionally_concave_energies}.} will be conditionally concave. 
%
%

	\begin{reptheorem}{thm:cond_conc}
	Let $\Phi:\Real^d\too\Real$, $\Psi:\Real^s\too \Real$ be both conditionally positive (or negative) definite functions of order $1$. For any pair of graphs with affinity matrices $A,B\in\Real^{n\times n}$ so that 
	\begin{equation}
		A_{ij}=\Phi(x_i-x_j), \quad B_{ij}=\Psi(y_i-y_j)		
	\end{equation}  
	for some arbitrary $\set{x_i}_{i\in [n]}\subset\Real^d$, $\set{y_i}_{i\in [n]} \subset \Real^s$, the energy $E_2(X)$ is conditionally concave, \ie, its Hessian $M\vert_{\lin{\DS}}\prec 0$.	\end{reptheorem}

	One useful application of this theorem is in matching graphs with Euclidean affinities, since Euclidean distances are conditionally negative definite of order $1$ \citep{wendland2004scattered}. That is, the affinities are Euclidean distances of points in  Euclidean spaces of arbitrary dimensions, 
	\begin{equation}
		A_{ij}=\norm{x_i-x_j}_2, \quad B_{ij}=\norm{y_i-y_j}_2,
	\end{equation}
	where $\set{x_i}_{i\in[n]}\subset \Real^d$,  $\set{y_i}_{i\in[n]}\subset \Real^s$. This class contains, besides Euclidean graphs, also affinities made out of distances that can be isometrically embedded in Euclidean spaces such as diffusion distances \citep{coifman2006diffusion}, distances induced by deep learning embeddings (\eg ~\cite{schroff2015facenet}) and  Mahalanobis distances. Furthermore, as shown in \cite{bogomolny2007distance} the spherical distance, $A_{ij}=d_{S^d}(x_i,x_j)$, is also conditionally negative definite over the sphere and therefore can be used in the context of the theorem as-well.

	Second, we generalize the notion of conditionally concave energies to \emph{probably conditionally concave} energies. Intuitively, the energy $E$ is called \emph{probably conditionally concave} if it is rare to find a linear subspace $D$ of $\lin{\DS}$ so that the restriction of $E$ to it is convex, that is $M\vert_D \succeq 0$. 	%
	The primary motivation in considering probably conditionally concave energies is that they enjoy (with high probability) the same properties as the conditionally concave energies, \ie, (i)-(iii). Therefore, locally minimizing probably conditionally concave energies over $\F$ can be done also with the Frank-Wolfe algorithm, with guarantees (in probability) on the feasibility of both the optimization result and the solution set of this energy. 
	
	A surprising fact we show is that probably conditionally concave energies are pretty common and include Hessian matrices $M$ with almost the same ratio of positive to negative eigenvalues. The following theorem bounds the probability of finding uniformly at random a linear subspace $D$ such that the restriction of $M\in\Real^{m\times m}$ to $D$ is convex, \ie, $M\vert_D\succ 0$. The set of $d$-dimensional linear subspaces of $\Real^{m}$ is called the Grassmannian $G_r(d,m)$ and it has a compact differential manifold structure and a uniform measure $P_r$.	%
	\begin{reptheorem}{thm:matrix_prob_concave}
		Let $M\in \Real^{m\times m}$ be a symmetric matrix with eigenvalues $\lambda_1,\dots, \lambda_m$.  Then, for all $t\in(0,\frac{1}{2\lambda_{\max}})$: \vspace{-5pt}
		\begin{equation}\label{e:bound}	
		P_r( M\vert_D  \succeq 0) \leq {{\prod_{i=1}^{m}(1-2t\lambda_i)^{-\frac{d}{2}}}},\vspace{-5pt}
		\end{equation}
		where $M\vert_D$ is the restriction of $M$ to the $d$-dimensional linear subspace defined by $D\in G_r(d,m)$ and the probability is taken with respect to the Haar probability measure on $G_r(d,m)$.  \vspace{-5pt} 	\end{reptheorem}
	\rv{For the case $d=1$ the probability of $M\vert_D\succeq 0$ can be interpreted via distributions of quadratic forms. Previous works aimed at calculating and bounding similar probabilities \citep{imhof1961computing,rudelson2013hanson} but in different (more general) settings providing less explicit bounds. As we will see, the case $d>1$ quantifies the chances of local minima residing at high dimensional faces of $\hull{\F}$.} 
	
		\rv{As a simple use-case of theorem \ref{thm:matrix_prob_concave},} consider a matrix where  $51\%$ of the eigenvalues are $-1$ and $49\%$ are $+1$; the probability of finding a convex direction of this matrix, when the direction is uniformly distributed, is exponentially low in the dimension of the matrix. 	
	As we (empirically) show, one class of problems that in practice presents probably conditionally concave $E_2$ are when the affinities $A,B$ describe geodesic distances on surfaces. 
	
	

	\rv{
	Probable concavity can be further used to prove theorems regarding the likelihood of finding a local minimum outside the matching set $\F$ when minimizing $E$ over a relaxed matching polytope $\hull{\F}$. We will show the existence of a rather general probability space (in fact, a family) $(\Omega_{m},P_r)$ of Hessians $M\in \Real^{m\times m}\in \Omega_{m}$ with a natural probability measure, $P_r$, so that the probability of local minima of $E(X)$ to be outside $\F$ is very small. This result is stated and proved in theorem \ref{thm:local_minima}.
%
%
An immediate conclusion of this result provides a proof of a probabilistic version of properties (i) and (ii) stated above for energies drawn from this distribution. In particular, the global minima of $E(X)$ over $\DS$ coincide with those over $\Pi_n$ with high probability. The following theorem provides a general result in the flavor of \cite{Lyzinski2016} for a large class of quadratic energies.
  \begin{reptheorem}{thm:permutations_in_prob}
	Let $E$ be a quadratic energy with Hessian drawn from the probability space $(\Omega_{m},P_r)$. The chance that a local minimum of $\min_{X\in\DS}E(X)$ is outside $\Pi_n$ is extremely small, bounded by $exp(-c_1 n^2)$, for some constant $c_1 > 0$. \vspace{-5pt} \end{reptheorem}	}
		
	Third, when the energy of interest $E(X)$ is not probably conditionally concave over $\lin{\F}$ there is no guarantee that the local optimum of $E$ over $\hull{\F}$ is in $\F$. We devise a simple variant of the Frank-Wolfe algorithm, replacing the standard line search with a \emph{concave search}. Concave search means subtracting from the energy $E$ convex parts that are constant on $\F$ (\ie, relaxations) until an energy reducing step is found. \vspace{-5pt}
	
%
%
%

	\section{Conditionally concave energies}\vspace{-5pt}
	\label{s:conditionally_concave_energies}
	We are interested in the application of the Frank-Wolfe algorithm \cite{frank1956algorithm} for locally optimizing $E_2$ (potentially with a linear term) from \eqref{e:E2} over the doubly-stochastic matrices:\vspace{-0pt}
	\begin{subequations}\label{e:cgm}
		\begin{align}
		\min_{X} & \quad  E(X)\\
		\mathrm{s.t.} & \quad  X \in \DS 
		\end{align}\vspace{-5pt}
	\end{subequations}
where $E(X)=-[X]^T (B\otimes A) [X] + a^T [X]$.  For completeness, we include a simple pseudo-code:
%
%

\IncMargin{1em}	
\begin{algorithm}[H]
\SetKwInOut{Input}{input}
\Input{$X_0\in \hull{\F}$}
\BlankLine
\While{not converged}{
\emph{compute step:}  $X_1=\min_{X\in \DS} -2[X_0]^T (B\otimes A) [X] + a^T [X] $\;
\emph{line-search:} $t_0=\argmin_{t\in[0,1]} E((1-t)X_0 + t X_1) $ \;
\emph{apply step:}  $X_0= (1-t_0)X_0 + t_0 X_1$ \;
}
\caption{Frank-Wolfe algorithm.}\label{alg:fw}
\end{algorithm}

\begin{definition}
We say that $E(X)$ is \emph{conditionally concave} if it is concave when restricted to the linear space $\lin{\F}$, the linear part of the affine-hull $\hull{\F}$.	
\end{definition}

If $E(X)$ is conditionally concave we have that properties (i)-(iii) of concave relaxations detailed above hold. In particular Algorithm \ref{alg:fw} would always accept $t_0=1$ as the optimal step, and therefore it will produce a series of feasible matchings $X_0\in \Pi_n$ and will converge after a finite number of steps to a permutation local minimum $X_*\in \Pi_n$ of \eqref{e:cgm}.  Our first result in this paper provides sufficient condition for $W=-B\otimes A$ to be concave. It provides a connection between \emph{conditionally positive (or negative) definite} functions \citep{wendland2004scattered}, and negative definiteness of $-B\otimes A$:

\begin{definition}
A function $\Phi:\Real^d\too \Real$ is called \emph{conditionally positive definite of order $m$} if for all pairwise distinct points $\set{x_i}_{i\in [n]}\subset \Real^d$ and all $0\ne \eta\in\Real^{n}$ satisfying $\sum_{i\in [n]} \eta_i p(x_i)=0$ for all $d$-variate polynomials $p$ of degree less than $m$, we have  $\sum_{ij=1}^n {\eta}_i \bar{\eta}_j \Phi(x_i-x_j) >0$. 
\end{definition}
Specifically, $\Phi$ is conditionally positive definite of order 1 if for all pairwise distinct points $\set{x_i}_{i\in [n]}\subset \Real^d$ and zero-sum vectors $0\ne\eta\in\RR^d$ we have  $\sum_{ij=1}^n {\eta}_i \bar{\eta}_j \Phi(x_i-x_j) >0$. Conditionally negative definiteness is defined analogously. 
Some well-known functions satisfy the above conditions, for example: $-\|x \|_2,~-(c^2+\| x\|_2^2)^\beta$ for $\beta\in(0,1]$ are conditionally positive definite of order $1$, while the functions $\exp(-\tau^2\|x\|_2^{2})$ for all $\tau$, and $c_{30}=(1-\|x\|_2^2)_+$ are conditionally positive definite of order 0 (also called just positive definite functions). Note that if $\Phi$ is conditionally positive definite of order $m$, it is also conditionally positive definite of any order $m'>m$. Lastly, as shown in \cite{bogomolny2007distance}, spherical distances $-d(x,x')^\gamma$ are conditionally positive semidefinite for $\gamma\in(0,1]$, and $\exp(-\tau^2 d(x,x')^\gamma)$ are positive definite for $\gamma\in(0,1]$ and all $\tau$.
We now prove:

\begin{theorem}\label{thm:cond_conc}
	Let $\Phi:\Real^d\too\Real$, $\Psi:\Real^s\too \Real$ be both conditionally positive (or negative) definite functions of order $1$. For any pair of graphs with affinity matrices $A,B\in\Real^{n\times n}$ so that 
	\begin{equation}
		A_{ij}=\Phi(x_i-x_j), \quad B_{ij}=\Psi(y_i-y_j)		
	\end{equation}  
	for some arbitrary $\set{x_i}_{i\in [n]}\subset\Real^d$, $\set{y_i}_{i\in [n]} \subset \Real^s$, the energy $E_2(X)$ is conditionally concave, \ie, its Hessian $M\vert_{\lin{\DS}}\prec 0$.	\end{theorem}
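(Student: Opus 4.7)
The plan is to prove $M\vert_{\lin{\DS}}\prec 0$ by identifying the quadratic form $[V]^T(B\otimes A)[V]$ with a trace expression, centering it with the projection $J = I - \tfrac{1}{n}\mathbf{1}\mathbf{1}^T$, and then factoring through the conditional definiteness of $\Phi,\Psi$.

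\textbf{Step 1: Convert to trace form.} Using the standard identity $[V]^T(B\otimes A)[V]=\tr(V^T A V B)$ (with $B$ symmetric), it suffices to show $\tr(V^T A V B) > 0$ for every nonzero $V \in \lin{\DS}$ when $\Phi,\Psi$ are conditionally positive definite of order $1$ (the negative-definite case is identical after flipping both signs; the mixed case is where the argument breaks, explaining the ``both positive or both negative'' hypothesis).

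\textbf{Step 2: Centering.} Since $V\in\lin{\DS}$ iff $V\mathbf{1}=0$ and $V^T\mathbf{1}=0$, we have $V=JVJ$. Inserting this and using the cyclic property of trace gives $\tr(V^T A V B) = \tr\!\bigl(V^T (JAJ)\,V\,(JBJ)\bigr)$. The point of this rewrite is that conditional positive definiteness of $\Phi$ of order $1$ is precisely the statement that $\eta^T A \eta > 0$ for every nonzero zero-sum $\eta$, i.e.\ that $JAJ\succeq 0$ with kernel exactly $\mathrm{span}(\mathbf{1})$ (assuming distinct $x_i$; if some coincide one still gets PSD, and the strict inequality below follows from $V\in\lin{\DS}$ having no component in $\mathrm{span}(\mathbf{1})$). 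The same holds for $JBJ$ with respect to $\Psi$.

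\textbf{Step 3: Factor and bound.} Write $JAJ=C^TC$ and $JBJ=D^TD$. Then
\begin{equation*}
\tr\!\bigl(V^T(JAJ)V(JBJ)\bigr)=\tr\!\bigl(DV^TC^TCVD^T\bigr)=\norm{CVD^T}_F^2\geq 0.
\end{equation*}

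\textbf{Step 4: Strict positivity.} The remaining step—which I expect to be the only subtle one—is to argue that $\norm{CVD^T}_F=0$ forces $V=0$ for $V\in\lin{\DS}$. From $\ker(C)=\ker(D)=\mathrm{span}(\mathbf{1})$ and $\mathrm{range}(D^T)=\mathbf{1}^\perp$, the vanishing means $CVy=0$, equivalently $Vy\in\mathrm{span}(\mathbf{1})$, for every $y\in\mathbf{1}^\perp$. But $V^T\mathbf{1}=0$ forces $Vy\in\mathbf{1}^\perp$ as well, so $Vy=0$ on $\mathbf{1}^\perp$; combined with $V\mathbf{1}=0$ this gives $V=0$. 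Therefore the form is strictly positive on $\lin{\DS}\setminus\{0\}$, and the Hessian $M$ (a positive multiple of $-B\otimes A$) is negative definite when restricted to $\lin{\DS}$, as claimed.
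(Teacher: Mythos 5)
Your proof is correct, and it reaches the same underlying fact as the paper — that conditional positive (or negative) definiteness of order $1$ makes $A$ and $B$ positive (or negative) definite on $\one^\perp$, and that this positivity tensors up to $\lin{\DS}$ — but by a different mechanism. The paper first proves a basis lemma (its Lemma \ref{lem:basis}): if $F$ is an orthonormal basis of $\one^\perp$ then $F\otimes F$ is an orthonormal basis of $\lin{\DS}$; the restriction of $B\otimes A$ is then $(F^TBF)\otimes(F^TAF)$ by the mixed-product property, and the conclusion follows from the standard fact that a Kronecker product of positive definite matrices is positive definite. You instead stay with the unvectorized form $\tr(V^TAVB)$, replace $A,B$ by their centerings $JAJ,JBJ$ using $V=JVJ$, factor $JAJ=C^TC$ and $JBJ=D^TD$, and identify the form with $\norm{CVD^T}_F^2$. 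Your route avoids invoking the Kronecker-PD fact as a black box and makes the strict-positivity step fully explicit (your Step 4 is, in effect, an elementary proof of that fact in this setting), at the cost of a slightly longer kernel-chasing argument; the paper's route is shorter but leaves both the PD-of-Kronecker-product claim and the strictness implicit. One shared caveat: both proofs need the restrictions of $A$ and $B$ to $\one^\perp$ to be \emph{strictly} definite, which requires the points to be pairwise distinct (as in the definition of conditional definiteness); your parenthetical about coincident points is too optimistic — if points coincide, $\ker(JAJ)$ can exceed $\mathrm{span}(\one)$ and strict negativity of the Hessian can genuinely fail (e.g.\ all $x_i$ equal gives $JAJ=0$) — but the paper's statement of "arbitrary" point sets glosses over exactly the same issue, so this is not a defect specific to your argument.
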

	
	\begin{lemma}[orthonormal basis for $\lin{\DS}$]\label{lem:basis}
		If the columns of $F\in\Real^{n\times (n-1)}$ constitute an orthonormal basis for the linear space $\one^\perp=\set{x\in\Real^n \ \vert \ x^T\one=0}$ then the columns of $F \otimes F$ are an orthonormal basis for $\lin{\DS}$. 
	\end{lemma}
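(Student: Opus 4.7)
The plan is to combine three standard ingredients: a dimension count of $\lin{\DS}$, the usual Kronecker product identity $(A\otimes C)(B\otimes D)=(AB)\otimes(CD)$ for orthonormality, and the vectorization identity $f_j\otimes f_i=[\,f_i f_j^T\,]$ to translate columns of $F\otimes F$ into matrices that visibly have zero row and column sums.

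First I would show that $\dim \lin{\DS}=(n-1)^2$, which is exactly the number of columns of $F\otimes F$. The defining constraints $X\mathbf{1}=0$ and $X^T\mathbf{1}=0$ impose $2n$ linear equations on $X\in\Real^{n\times n}$, but these are not independent: summing the $n$ row-sum constraints yields the same scalar as summing the $n$ column-sum constraints, so exactly one dependency exists, leaving $2n-1$ independent constraints and a subspace of dimension $n^2-(2n-1)=(n-1)^2$.

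Next I would verify orthonormality. Using the mixed-product property,
\begin{equation}
(F\otimes F)^T(F\otimes F)=(F^T F)\otimes(F^T F)=I_{n-1}\otimes I_{n-1}=I_{(n-1)^2},
\end{equation}
so the $(n-1)^2$ columns of $F\otimes F$ are orthonormal in $\Real^{n^2}$.

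Finally I would show every such column lies in $\lin{\DS}$ under the column-stack identification $[\,\cdot\,]$ used throughout the paper. Writing $F=[f_1,\dots,f_{n-1}]$, the column of $F\otimes F$ indexed by $(i,j)$ is $f_j\otimes f_i=[\,f_i f_j^T\,]$. Since the columns of $F$ span $\mathbf{1}^\perp$, we have $f_i^T\mathbf{1}=0=f_j^T\mathbf{1}$, hence
\begin{equation}
(f_i f_j^T)\mathbf{1}=f_i(f_j^T\mathbf{1})=0,\qquad (f_i f_j^T)^T\mathbf{1}=f_j(f_i^T\mathbf{1})=0,
\end{equation}
so $f_i f_j^T\in\lin{\DS}$. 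An orthonormal set of $(n-1)^2$ vectors inside the $(n-1)^2$-dimensional space $\lin{\DS}$ is necessarily a basis, completing the proof.

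There is no real obstacle here; the only point that requires care is keeping the Kronecker-to-vec convention consistent with the paper's column-stack notation $[X]$, so that columns of $F\otimes F$ really do correspond to the rank-one matrices $f_i f_j^T$ and not their transposes. Once that bookkeeping is fixed, the three steps above combine mechanically.
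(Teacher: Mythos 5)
Your proof is correct and follows essentially the same route as the paper's: verify orthonormality via the mixed-product property, identify each column of $F\otimes F$ with a rank-one matrix $f_i f_j^T$ having zero row and column sums, and conclude by the dimension count $\dim\lin{\DS}=(n-1)^2$. The only difference is that you justify the dimension count explicitly, which the paper takes for granted.
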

	\begin{proof}
	First, $(F\otimes F)^T (F\otimes F) = (F^T\otimes F^T)(F\otimes F) = (F^T F)\otimes (F^T F) = I_{n-1} \otimes I_{n-1} = I_{(n-1)^2}$. Therefore $F\otimes F$ is full rank with $(n-1)^2$ orthonormal columns. Any column of $F\otimes F$ is of the form $F_i \otimes F_j$, where $F_i,F_j$ are the $i^{\text{th}}$ and $j^{\text{th}}$ columns of $F$, respectively. Now, reshaping $F_i\otimes F_j$ back into an $n\times n$ matrix using the inverse of the bracket operation we get $X=]F_i\otimes F_j[=F_j F_i^T$ which are clearly in $\lin{\DS}$. Lastly, since the dimension of $\lin{\DS}$ is $(n-1)^2$ the lemma is proved.
	\end{proof}
	
	\begin{proof}(of Theorem \ref{thm:cond_conc} )
	Let $A,B\in\Real^{n\times n}$ be as in the theorem statement. Checking that $E(X)$ is conditionally concave amounts to restricting the quadratic form $-[X]^T (B\otimes A) [X]$ to $\lin{\DS}$: $-(F\otimes F)^T (B\otimes A) (F \otimes F) = -(F^T B F)\otimes (F^T A F) \prec 0$, where we used Lemma \ref{lem:basis} and the fact that $\Phi,\Psi$ are conditionally positive definite of order $1$.
	\end{proof}

	\begin{corollary}
		Let $A,B$ be Euclidean distance matrices then the solution set of Problem \eqref{e:cgm} and $\gm$ coincide. 
	\end{corollary}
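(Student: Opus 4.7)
The plan is to combine Theorem \ref{thm:cond_conc}, Birkhoff's theorem, and the well-known fact that $E_1$ and $E_2$ agree up to a constant on permutations. First, I would invoke Theorem \ref{thm:cond_conc} with $\Phi(x)=\Psi(x)=-\norm{x}_2$, which was noted right after its statement to be conditionally positive definite of order $1$. This immediately yields that the Hessian $M=-B\otimes A$ of $E_2$ satisfies $M\vert_{\lin{\DS}}\prec 0$, i.e., $E_2$ is strictly concave along every nonzero direction in $\lin{\DS}$. Since $\aff{\DS}=X_0+\lin{\DS}$ for any fixed $X_0\in\DS$, this means $E_2$ is strictly concave as a function on the affine hull containing $\DS$.

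Second, I would use strict concavity on $\aff{\DS}$ to argue that every minimizer of $E_2$ over $\DS$ is an extreme point of $\DS$. Suppose for contradiction that $X^{\star}\in\DS$ is a minimizer but not extreme; write $X^{\star}=\tfrac12(Y+Z)$ with $Y\ne Z$ in $\DS$. Then $Y-Z\in\lin{\DS}\setminus\set{0}$, and strict concavity along that direction gives $E_2(X^{\star})>\tfrac12\parr{E_2(Y)+E_2(Z)}\ge \min\set{E_2(Y),E_2(Z)}$, contradicting the minimality of $X^{\star}$. By Birkhoff's theorem $\ext{\DS}=\Pi_n$, hence $\argmin_{X\in\DS}E_2(X)\subseteq \Pi_n$, and the inclusion is actually an equality since $\Pi_n\subset\DS$.

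Third, I would establish that the $\gm$ problem and $\min_{X\in\Pi_n}E_2(X)$ have the same argmin. For any permutation $X$, $X^TX=I$, so expanding $E_1(X)=\norm{AX-XB}_F^2$ gives $E_1(X)=\tr(A^2)+\tr(B^2)-2\tr(BX^T AX)=\mathrm{const}+2E_2(X)$. Therefore $\argmin_{X\in\Pi_n}E_1(X)=\argmin_{X\in\Pi_n}E_2(X)$, and chaining with the previous step yields $\argmin_{X\in\DS}E_2(X)=\argmin_{X\in\Pi_n}E_1(X)$, which is exactly the claim.

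The whole argument is short because the substantive work has been pushed into Theorem \ref{thm:cond_conc}; the only delicate point is keeping straight that conditional concavity on $\lin{\DS}$ translates to strict concavity on the affine hull $\aff{\DS}$ (and not just on some lower-dimensional slice), so that the standard ``strictly concave $\Rightarrow$ optima at vertices'' argument on the polytope $\DS$ applies unmodified. Once that observation is in place, Birkhoff's theorem and the identity $E_1=\mathrm{const}+2E_2$ on $\Pi_n$ close the proof with essentially no additional work.
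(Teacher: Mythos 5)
Your proof is correct and matches the paper's (implicit) argument: the corollary is stated without its own proof, resting on Theorem \ref{thm:cond_conc} together with the earlier claim that conditionally concave relaxations enjoy property (i) (identical solution sets), which is exactly the strict-concavity-forces-vertex-optima, Birkhoff, and $E_1=\mathrm{const}+2E_2$ chain you spell out. The only cosmetic point is that you can take $\Phi=\Psi=\norm{\cdot}_2$ directly (conditionally \emph{negative} definite of order $1$, a case the theorem explicitly covers) rather than negating the distances, though your choice is harmless since $E_2$ is unchanged when both $A$ and $B$ flip sign.
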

	
	
\section{Probably conditionally concave energies}
	Although Theorem \ref{thm:cond_conc} covers a rather wide spectrum of instantiations of Problem \eqref{e:cgm} it definitely does not cover all interesting scenarios. In this section we would like to consider a more general energy $E(X)=[X]^T M [X] + a^T[X]$, $X\in\Real^{n\times n}$, $M\in\Real^{n^2\times n^2}$ and the optimization problem:
	\begin{subequations}\label{e:gm_M}
		\begin{align}
		\min_{X} & \quad  E(X)\\
		\mathrm{s.t.} & \quad  X \in \hull{\F}
		\end{align}
	\end{subequations}
	 We assume that $\F=\ext{\hull{\F}}$, namely, the matchings are extreme points of their convex hull (as happens \eg, for permutations $F=\Pi_n$). When the restricted Hessians $M\vert_{\lin{\F}}$ are $\epsilon-$\emph{negative definite} (to be defined soon) we will call $E(X)$ \emph{probably conditionally concave}.

	 Probably conditionally concave energies $E(X)$ will possess properties (i)-(iii) of conditionally concave energies with high probability. Hence they allow using Frank-Wolfe algorithms, such as Algorithm \ref{alg:fw}, with no line search ($t_0=1$) and achieve local minima in $\F$ (no post-processing is required). In addition, we prove that certain classes of probably conditionally concave relaxations have no local minima that are outside $\F$, with high probability. In the experiment section we will also demonstrate that in practice this algorithm works well for different choices of probably conditionally concave energies. \rv{Popular energies that fall into this category are, for example, \eqref{e:E2} with $A,B$ geodesic distance matrices or certain  functions thereof}.

	
	We first make some preparations. Recall the definition of the \emph{Grassmannian} $G_r(d,m)$: It is the set of $d$-dimensional linear subspaces in $\Real^m$; it is a compact differential manifold defined by the quotient $O(m)/O(d)\times O(m-d)$, where $O(s)$ is the orthogonal group in $\Real^s$. The orthogonal group $O(m)$ acts transitively on $G_r(d,m)$ by taking an orthogonal basis of any $d$-dimensional linear subspace to an orthogonal basis of a possibly different $d$-dimensional subspace. On $O(m)$ there exists Haar probability measure, that is a probability measure invariant to actions of $O(m)$. The Haar probability measure on $O(m)$ induces an $O(m)$-invariant (which we will also call Haar) probability measure on $G(k,m)$. We now introduce the notion of $\epsilon$-negative definite matrices:
	\begin{definition}\label{def:eps_negdef}
		A symmetric matrix $M\in\Real^{m\times m}$ is called $\epsilon$-\emph{negative definite} if the probability of finding a $d$-dimensional linear subspace $D\in G(d,m)$ so that $A$ is convex over $D$ is smaller than $\epsilon^d$. That is, $P_r(\set{M\vert_D \succeq 0})\leq \epsilon^d$ where the probability is taken with respect to a Haar $O(m)$-invariant measure on the Grassmannian $G_r(d,m)$. 
	\end{definition}
	One way to interpret $M\vert_D$, the restriction of the matrix $M$ to the linear subspace $D$, is to consider a matrix $F\in\Real^{m\times d}$ where the columns of $F$ form a basis to $D$ and consider $M\vert_D = F^T M F$. Clearly, negative definite matrices are $\epsilon$-negative definite for all $\epsilon>0$. 	
	The following theorem helps to see what else this definition encapsulates: 
	\begin{theorem}\label{thm:matrix_prob_concave}
		Let $M\in \Real^{m\times m}$ be a symmetric matrix with eigenvalues $\lambda_1,\dots, \lambda_m$.  Then, for all $t\in(0,\frac{1}{2\lambda_{\max}})$: \vspace{-5pt}
		\begin{equation}\label{e:bound}
		P_r( M\vert_D  \succeq 0) \leq {{\prod_{i=1}^{m}(1-2t\lambda_i)^{-\frac{d}{2}}}},\vspace{-0pt}
		\end{equation}
		where $M\vert_D$ is the restriction of $M$ to the $d$-dimensional linear subspace defined by $D\in G_r(d,m)$ and the probability is taken with respect to the Haar probability measure on $G_r(d,m)$.  \vspace{-5pt}
	 \end{theorem}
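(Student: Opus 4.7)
My plan is a Chernoff / moment-generating-function argument after replacing the Haar-random subspace with a Gaussian column-span model.

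First I reduce to a Gaussian model. Let $Y \in \Real^{m \times d}$ have i.i.d.\ standard Gaussian entries. By rotational invariance of the standard Gaussian distribution, the column span $\mathrm{span}(Y)$ is Haar-distributed on $G_r(d,m)$. If $F \in \Real^{m\times d}$ is an orthonormal basis of $D := \mathrm{span}(Y)$, then almost surely $Y = F R$ for some invertible $R \in \Real^{d\times d}$ (e.g.\ from the QR factorization), whence $Y^T M Y = R^T (F^T M F) R$. By Sylvester's law of inertia this congruence preserves the signs of eigenvalues, so $F^T M F \succeq 0$ iff $Y^T M Y \succeq 0$, giving
\[
P_r(M|_D \succeq 0) = P(Y^T M Y \succeq 0).
\]

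Next I pass from the matrix inequality to a scalar one by using the trivial implication that any PSD matrix has nonnegative trace: $P(Y^T M Y \succeq 0) \leq P(\tr(Y^T M Y) \geq 0)$. Diagonalizing $M = Q\Lambda Q^T$ and absorbing $Q$ into each Gaussian column (again by rotational invariance), I obtain
\[
\tr(Y^T M Y) = \sum_{j=1}^d \sum_{i=1}^m \lambda_i z_{ij}^2,
\]
where the $z_{ij}$ are i.i.d.\ standard normal, i.e.\ a weighted sum of independent $\chi^2_1$ variables.

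Finally, for any $t > 0$ Markov's inequality applied to $e^{t\,\tr(Y^T M Y)}$ gives
\[
P(\tr(Y^T M Y) \geq 0) \leq \EE\!\left[e^{t\,\tr(Y^T M Y)}\right] = \prod_{i=1}^{m} (1 - 2t\lambda_i)^{-d/2},
\]
by independence and the standard $\chi^2_1$ MGF $\EE[e^{sZ^2}] = (1-2s)^{-1/2}$ (valid for $s < 1/2$). This expectation is finite precisely when $2t\lambda_i < 1$ for every $i$, i.e.\ $t \in (0, 1/(2\lambda_{\max}))$, matching the range in the statement.

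The only delicate step is the reduction to the Gaussian model, which I would justify carefully via $O(m)$-invariance of the standard Gaussian and Sylvester's law of inertia. The relaxation from PSD to nonnegative trace is where we surrender slack, but it is precisely what makes the MGF factorize over the eigenvalues of $M$ and produce the stated product bound; the rest is a textbook Chernoff computation.
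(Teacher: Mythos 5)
Your proof is correct and follows essentially the same route as the paper's: both replace the Haar-random subspace by the column span of an i.i.d.\ Gaussian $m\times d$ matrix, reduce the positive-semidefiniteness event to nonnegativity of weighted sums of independent $\chi^2_1$ variables via the spectral decomposition and rotational invariance, and finish with a Chernoff/moment-generating-function bound. The only point of divergence is the relaxation step --- you use $Y^TMY\succeq 0 \Rightarrow \tr(Y^TMY)\geq 0$ and a single Chernoff bound on the full sum of $md$ chi-squared terms, whereas the paper uses the (slightly tighter, pre-Chernoff) event that all $d$ diagonal entries $F_j^TMF_j$ are nonnegative together with their independence across columns; after applying the exponential moment bound with the same $t$, both yield exactly the product $\prod_{i=1}^m(1-2t\lambda_i)^{-d/2}$.
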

	 
	 \begin{proof}
	 Let $F$ be an $m\times d$ matrix of i.i.d.~standard normal random variables $\N(0,1)$. Let $F_j$, $j\in [d]$, denote the $j^{\text{th}}$ column of $F$. The multivariate distribution of $F$ is $O(m)$-invariant in the sense that for a subset $\mA\subset \Real^{m\times d}$, $P_r(R\mA)=P_r(\mA)$ for all $R\in O(m)$. Therefore, $P_r(M\vert_D\succeq 0)=P_r(F^T M F \succeq 0)$. Next, $P_r(F^T M F \succeq 0) \leq P_r( \cap_{j=1}^d \set{F_j^T M F_j \geq 0}  )=\prod_{j=1}^d P_r(F_j^T M F_j\geq 0)$, where the inequality is due to the fact that a positive semidefinite matrix necessarily has non-negative diagonal, and the equality is due to the independence of the random variables $F_j^T M F_j$, $j\in[d]$. We now calculate the probability $P_r(F_1^T M F_1)$ which is the same for all columns $j\in [d]$. For brevity let $X=(X_1,X_2,\ldots,X_m)^T=F_1$. Let $M=U\Lambda U^T$, where $U\in O(m)$ and $\Lambda = \diag(\lambda_1,\lambda_2,\ldots,\lambda_m)$ be the spectral decomposition of $M$. Since $UX$ has the same distribution as $X$ we have that $P_r(X^T M X\geq 0) = P_r(X^T \Lambda X\geq 0) = P_r(\sum_{i=1}^m \lambda_i X_{i}^2\geq 0)$. Since $X_i^2 \sim \chi^2(1)$ we have transformed the problem into a non-negativity test of a linear combination of chi-squared random variables. Using the Chernoff bound we have for all $t>0$:
		\[P_r\parr{\sum_{i=1}^{m}\lambda_iX_i^2\geq 0} \leq \EE \parr{e^{t\sum_{i=1}^{m}\lambda_iX_i^2}} = \prod_{i=1}^{m}\EE \left[e^{t\lambda_i X_i^2}, \right]\]
		where the last equality follows from the independence of $X_1,...,X_m$. To finish the proof we note that $ \EE \left[e^{t\lambda_i X_i^2} \right]$ is the moment generating function of the random variable $X_i^2$ sampled at $t\lambda_i$ which is known to be $(1-2t \lambda_i)^{-1/2} $ for $t\lambda_i<\frac{1}{2}$ which means that we can take $t<\frac{1}{2\lambda_i}$ when $\lambda_i\ne 0$ and disregard all $\lambda_i=0$.\vspace{-10pt}
	 \end{proof}

	Theorem \ref{thm:matrix_prob_concave} shows that there is a \emph{concentration of measure} phenomenon when the dimension $m$ of the matrix $M$ increases. For example consider \vspace{-5pt}
	\begin{equation}\label{e:Lambda}
	\Lambda_{m,p}=\big ( \overbrace{\lambda_1,\lambda_2,\ldots}^{(1-p)m},\overbrace{\mu_1,\mu_2,\ldots}^{pm} \big ),		
	\end{equation}
    where $\lambda_i\leq -b$, $b>0$ are the negative eigenvalues; $0 \leq \mu_i\leq a$, $a>0$ are the positive eigenvalues and the ratio of positive to negative eigenvalues is a constant $p\in (0,1/2)$. We can bound the r.h.s.~of \eqref{e:bound} with $(1+2bt)^{-\frac{(1-p)m}{2}}(1-2at)^{-\frac{pm}{2}}$. Elementary calculus shows that the minimum of this function over $t\in (0,1/2a)$ gives: \vspace{-5pt}
    \begin{equation}	\label{e:Pr_example}
       P_r(v^t M v \geq 0) \leq  \parr{\frac{a^{1-p}b^p}{\frac{a+b}{2}} \frac{1}{2}(1-p)^{p-1}p^{-p} }^{\frac{m}{2}},
    \end{equation}
    
    	\begin{wraptable}[5]{r}{0.20\columnwidth}	
    	\vspace{-13pt}	
    	\includegraphics[width=0.18\textwidth]{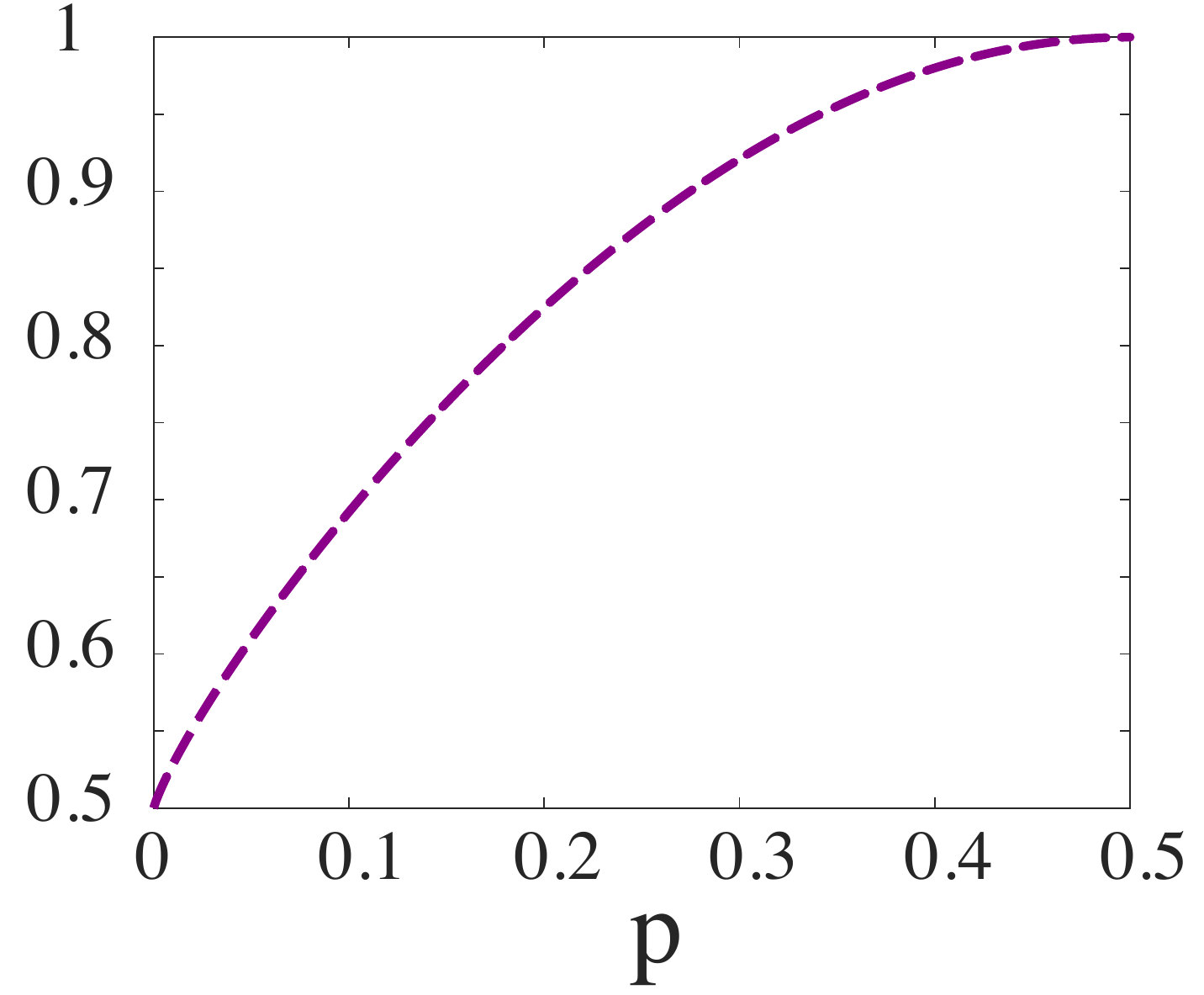}	
    \end{wraptable}
    
    where $v$ is uniformly distributed  on the unit sphere in $\Real^m$.
	The function $\frac{1}{2}(1-p)^{p-1}p^{-p}$ is shown in the inset and for $p<1/2$ it is strictly smaller than $1$. The term $\frac{a^{1-p}b^p}{(a+b)/2}$ is the ratio of the weighted geometric mean and the arithmetic mean. Using the weighted arithmetic-geometric inequality it can be shown that these terms is at-most $1$ if $a\leq b$. To summarize, if $a\leq b$ and $p<1/2$  the probability to find a convex (positive) direction in $M$ is exponentially decreasing in $m$, the dimension of the matrix.  One simple example is taking $a=b=1$, $p=0.49$ which shows that considering the matrices \vspace{-5pt} $$U\big ( \overbrace{-1,-1,\ldots,-1}^{0.51m},\overbrace{1,1,\ldots,1}^{0.49m} \big ) U^T\vspace{-0pt}$$ it will be extremely hard to get in random a convex direction in dimension $m\approx 300^2$, \ie, the probability will be $\approx 4\cdot 10^{-5}$ (this is a low dimension for a matching problem where $m=(n-1)^2$).

	
	Another consequence that comes out of this theorem (in fact, its proof) is that the probability of finding a linear subspace $D\in G_r(d,m)$ for which the matrix $M$ is positive semidefinite is bounded by the probability of finding a one-dimensional subspace $D_1\in G_r(1,m)$ to the power of $d$. Therefore the $d$ exponent in Definition \ref{def:eps_negdef} makes sense.  Namely, to show a symmetric matrix $M$ is $\epsilon$-negative definite it is enough to check one-dimensional linear subspaces. An important implication of this fact and one of the motivations for Definition \ref{def:eps_negdef} is that finding local minima at high dimensional faces of the polytope $\hull{\F}$ is much less likely than at low dimensional faces.

	Next, we would like to prove Theorem \ref{thm:local_minima} that shows that for natural probability space of Hessians $\set{M}$ the local minima of \eqref{e:gm_M} are with high probability in $\F$, \eg, permutations in case that $F=\Pi_n$.  We therefore need to devise a natural probability space of Hessians. We opt to consider Hessians of the form discussed above, namely 
\rv{	\begin{equation}\label{e:Omega_m}
	\Omega_m = \set{U\Lambda_{m,p}U^T \ \vert \ U\in O(m)},	
	\end{equation}
}
where $\Lambda_{m,p}$ is defined in \eqref{e:Lambda}. The probability measure over $\Omega_m$ is defined using the Haar probability measure on $O(m)$, that is for a subset $\mA\subset \Omega_m$ we define $Pr(\mA)=Pr(\set{U\in O(m) \ \vert \ U\Lambda_{m,p} U^T \in \mA})$, where the probability measure on the r.h.s.~is the probability Haar measure on $O(m)$. \rv{Note that \eqref{e:Omega_m} is plausible since the input graphs $G_A, G_B$ are usually provided with an arbitrary ordering of the vertices.  Writing the quadratic energy $E$ resulted from a different ordering $P,Q\in\Pi_n$ of the vertices of $G_A,G_B$ (resp.) yields the Hessian $H'=(Q\otimes P)(B\otimes A) (Q\otimes P)^T$, where $Q\otimes P\in\Pi_{m}\subset O(m)$. This motivates defining a Hessian probability space that is invariant to $O(m)$.  }	We prove:


	
		%
\begin{theorem}\label{thm:local_minima}
	If the number of extreme points of the polytope $\hull{F}$ is bounded by $\exp(m^{1-\epsilon})$, for some fixed arbitrary $\epsilon>0$, and the Hessian of $E$ is drawn from the probability space $(\Omega_{m},P_r)$, the chance that a local minimum of $\min_{X\in\hull{\F}}E(X)$ is outside $\F$ is extremely small, bounded by $exp(-c_1 m)$, for some constant $c_1 > 0$. \vspace{-5pt}
	\end{theorem}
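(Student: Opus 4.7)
The plan is to bound the probability that any face of $\hull{\F}$ of positive dimension supports a local minimum, then union-bound over all such faces. Every polytope is the disjoint union of the relative interiors of its faces, and since $\F = \ext{\hull{\F}}$ is the set of zero-dimensional faces, any local minimum outside $\F$ must lie in the relative interior of some face $F$ with $d := \dim F \geq 1$. Along any direction tangent to $\aff{F}$ the minimizer can move in both orientations without leaving $\mathrm{relint}(F) \subset \hull{\F}$, so the second-order necessary condition forces the restricted Hessian $M|_{D_F} \succeq 0$, where $D_F \subset \Real^{m}$ is the linear subspace parallel to $\aff{F}$. Hence
\begin{equation*}
P_r\bigl(\exists \text{ local min outside } \F\bigr) \;\leq\; \sum_{F:\,\dim F \geq 1} P_r\bigl(M|_{D_F} \succeq 0\bigr).
\end{equation*}

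For each fixed face, I would reduce to the Haar-random setting of Theorem \ref{thm:matrix_prob_concave} via $O(m)$-invariance. Writing $M = U\Lambda_{m,p}U^T$ with $U$ Haar on $O(m)$ and letting $V\in\Real^{m\times d}$ be an orthonormal basis of $D_F$, the matrix $U^T V$ has orthonormal columns whose span is Haar-uniform on $G_r(d,m)$, so $P_r(M|_{D_F}\succeq 0) = P_r(\Lambda_{m,p}|_{D'}\succeq 0)$ for $D'$ Haar-uniform. Applying Theorem \ref{thm:matrix_prob_concave} to $\Lambda_{m,p}$ and optimizing the Chernoff parameter $t$ exactly as in the derivation of \eqref{e:Pr_example}, with the single-column exponent $-1/2$ replaced by $-d/2$ as in \eqref{e:bound}, gives the uniform per-face bound
\begin{equation*}
P_r\bigl(M|_{D_F} \succeq 0\bigr) \;\leq\; q^{\,md/2}, \qquad q \;:=\; \frac{a^{1-p}b^p}{(a+b)/2}\cdot\tfrac{1}{2}(1-p)^{p-1}p^{-p} \;<\; 1,
\end{equation*}
where $q<1$ uses $a\leq b$ and $p<1/2$ as discussed following \eqref{e:Pr_example}.

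To count faces I would use the standard fact $F = \hull{\F}\cap\aff{F}$, so $F$ is determined by any $d+1$ affinely independent vertices of $F$. Selecting such a witness subset for each dim-$d$ face gives an injection into $(d+1)$-subsets of the $N \leq \exp(m^{1-\epsilon})$ extreme points, bounding the number of dim-$d$ faces by $\binom{N}{d+1}\leq N^{d+1}$. The union bound then becomes a geometric series,
\begin{equation*}
P_r\bigl(\exists \text{ local min outside } \F\bigr) \;\leq\; \sum_{d\geq 1} N^{d+1} q^{\,md/2} \;=\; N\sum_{d\geq 1}\bigl(Nq^{m/2}\bigr)^d,
\end{equation*}
and since $\log N \leq m^{1-\epsilon} = o(m)$ while $\log q^{m/2} = -\Theta(m)$, for $m$ large enough $Nq^{m/2}\leq 1/2$, the sum is at most $2N^2 q^{m/2} \leq \exp\bigl(2m^{1-\epsilon} + \tfrac{m}{2}\log q\bigr) \leq \exp(-c_1 m)$ for any $c_1 < |\log q|/2$.

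The crux is balancing the face count against the per-face concentration. The essential gain is the full exponent $q^{md/2}$ rather than $q^{m/2}$: the extra factor of $d$, which comes from the $d$-column independence step in the proof of Theorem \ref{thm:matrix_prob_concave}, is exactly what lets us absorb the $N^{d+1}$ count of $d$-dimensional faces into a convergent geometric series. Without it the subexponential-in-$m$ vertex growth $N\leq\exp(m^{1-\epsilon})$ could not be beaten. The argument relies crucially on $\epsilon>0$ being strict so that $\log N$ is truly sublinear in $m$ and thus dominated by the linear-in-$m$ decay $\tfrac{m}{2}|\log q|$.
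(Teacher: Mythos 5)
Your proof is correct, but it takes a different route from the paper's. The paper reduces everything to \emph{edges}: it observes that if the restricted Hessian on a face $f$ of dimension $\geq 1$ is positive semidefinite, then in particular some direction parallel to an edge of $\mP$ contained in $f$ satisfies $v_\alpha^T M v_\alpha \geq 0$; it then counts the at most $\exp(2m^{1-\epsilon})$ edges, bounds the expected number of such ``critical edges'' using only the $d=1$ case of Theorem \ref{thm:matrix_prob_concave} (i.e., \eqref{e:Pr_example}), and concludes by Markov's inequality. You instead union-bound directly over \emph{all} faces of every dimension $d\geq 1$, invoking the full $d$-dependent bound $q^{md/2}$ and the injection of $d$-faces into $(d+1)$-subsets of vertices to control the face count $N^{d+1}$ by a geometric series. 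The paper's reduction is leaner --- it never needs to count higher-dimensional faces or use the exponent $d$ in \eqref{e:bound} --- while your argument is self-contained at the level of faces and essentially recovers, as a by-product, the paper's subsequent generalization that local minima on $d$-dimensional faces occur with probability at most $\exp(-c_1 d m)$; the trade-off you identify (the $d$ in the exponent absorbing the $N^{d+1}$ face count) is exactly the point of that generalization. Both arguments share the same implicit hypotheses ($a\leq b$, $p<1/2$, and $m$ large enough that $m^{1-\epsilon}$ is dominated by $\frac{m}{2}|\log q|$), so your proof is at the same level of rigor as the paper's.
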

	\begin{proof}
	Denote all the edges (\ie, one-dimensional faces) of the polytope $\mP=\hull{\F}$ by indices $\alpha$. Even if every two extreme points of $\mP$ are connected by an edge there could be at most $\exp(2m^{1-\epsilon})$ edges.  
	 A local minimum $X_*\in \mP$ to \eqref{e:gm_M} that is not in $\F$ necessarily lies in the (relative) interior of some face $f$ of $\mP$ of dimension at-least one. The restriction of the Hessian $M$ of $E(X)$ to $\lin{f}$ is therefore necessarily positive semidefinite. This implies there is a direction $v_\alpha\in \Real^{m}$, parallel to an edge $\alpha$ of $\mP$ so that $v_\alpha^T M v_\alpha \geq 0$.  
	
		Let us denote by $X_\alpha$ the indicator random variable that equals one if $v_\alpha^T M v_\alpha \geq 0$ and zero otherwise. If $X_\alpha = 1$ we say that the edge $\alpha$ is a \emph{critical edge} for $M$. Let us denote $X = \sum_\alpha X_\alpha$ the random variable counting critical edges. The expected number of critical edges is $\EE(X)=\sum_\alpha P_r(v_\alpha^T M v_\alpha \geq 0)$. We use Theorem \ref{thm:matrix_prob_concave}, in particular \eqref{e:Pr_example}, to bound the summands. 
		
		Since $P_r(v_\alpha^T M v_\alpha \geq 0) = P_r(v_\alpha^T U \Lambda_{m,p} U^T v_\alpha \geq 0)$ and $U^T v_\alpha$ is distributed uniformly on the unit sphere in $\Real^{m}$, we can use \eqref{e:Pr_example} to infer that $P_r(v_\alpha^T M v_\alpha \geq 0) \leq \eta^{m/2}$ for some $\eta\in[0,1)$ and therefore $\EE(X)\leq \exp(m\log \eta/2)\sum_\alpha 1$ (note that $\log \eta < 0$).  Incorporating the bound on edge number in $\mP$ discussed above we get $\EE(X)\leq \exp(\frac{\log \eta}{2} m + 2m^{1-\epsilon})\leq  \exp(-c_1 m)$ for some constant $c_1>0$. Lastly, as explained above, the event of a local minimum not in $\F$ is contained in $X\geq 1$ and by Markov's inequality we finally get $P_r(X\geq 1)\leq \EE(X) \leq \exp(-c_1 m).$ \vspace{-5pt}	\end{proof}

Let us use this theorem to show that the local optimal solutions to Problem \eqref{e:gm_M} with permutations as matchings, $\F=\Pi_n$, are with high probability permutations:
\rv{\begin{theorem}\label{thm:permutations_in_prob}
	Let $E$ be a quadratic energy with Hessian drawn from the probability space $(\Omega_{m},P_r)$. The chance that a local minimum of $\min_{X\in\DS}E(X)$ is outside $\Pi_n$ is extremely small, bounded by $exp(-c_1 n^2)$, for some constant $c_1 > 0$. 
\end{theorem}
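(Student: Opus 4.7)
The plan is to derive this as a direct instantiation of Theorem \ref{thm:local_minima}. We specialize the polytope to $\mP = \hull{\F} = \DS$ with matchings $\F = \Pi_n$. The ambient dimension to use is $m = (n-1)^2 = \dim(\lin{\DS})$, since it is the restriction $M\vert_{\lin{\DS}}$ that governs the conditional concavity analysis and the edges $v_\alpha$ of the polytope all lie in $\lin{\DS}$. Consequently the Hessian drawn from $(\Omega_m, P_r)$ is interpreted as the restricted Hessian, and the conclusion $\exp(-c_1 m)$ of Theorem \ref{thm:local_minima} translates, after absorbing constants, into $\exp(-c_1' n^2)$.

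The only nontrivial verification is the hypothesis on the number of extreme points. Here $\ext{\DS} = \Pi_n$ by the Birkhoff–von Neumann theorem, so $|\ext{\mP}| = n!$. Using Stirling, $\log n! = n \log n - n + O(\log n)$, so there exists a constant $C$ with $n! \leq \exp(Cn\log n)$. We need to exhibit $\epsilon > 0$ with
\begin{equation}
C n \log n \;\leq\; m^{1-\epsilon} \;=\; (n-1)^{2(1-\epsilon)}
\end{equation}
for all sufficiently large $n$. Any choice $\epsilon \in (0, 1/2)$ works; for concreteness take $\epsilon = 1/4$, giving $m^{1-\epsilon} = (n-1)^{3/2}$, which dominates $n\log n$ for $n$ large. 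With this $\epsilon$ fixed, the hypothesis of Theorem \ref{thm:local_minima} is satisfied.

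Having verified the hypothesis, Theorem \ref{thm:local_minima} immediately yields that the probability of a local minimum of $\min_{X\in\DS} E(X)$ lying outside $\F = \Pi_n$ is at most $\exp(-c_1 m) = \exp(-c_1 (n-1)^2)$, and this is bounded by $\exp(-c_1' n^2)$ for a possibly smaller constant $c_1' > 0$. The main obstacle is essentially bookkeeping: one has to be careful about the identification $m = (n-1)^2$ rather than $m = n^2$, and one has to check that the factorial growth $n\log n$ stays strictly subpolynomial in $m$, which is precisely where the $\epsilon > 0$ slack in Theorem \ref{thm:local_minima} is needed. No new probabilistic argument is required beyond what is already established in Theorems \ref{thm:matrix_prob_concave} and \ref{thm:local_minima}.
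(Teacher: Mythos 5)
Your proposal is correct and follows essentially the same route as the paper: both proofs instantiate Theorem \ref{thm:local_minima} with $m=(n-1)^2$ by restricting the Hessian probability space to $\lin{\DS}$, and both verify the extreme-point hypothesis via Stirling's bound on $n!$ (the paper writes $n!\leq \exp((n-1)^{1.1})$, i.e., implicitly takes $\epsilon=0.45$, where you take $\epsilon=1/4$; this is immaterial). No gaps.
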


\begin{proof}
In this case the polytope $\DS=\hull{\Pi_n}$ is in the $(n-1)^2$ dimensional linear subspace $\lin{\DS}$ of $\Real^{n\times n}$. It therefore makes sense to consider the Hessians' probability space restricted to $\lin{\DS}$, that is considering $M\vert_{\lin{\DS}}$ and the orthogonal subgroup acting on it, $O((n-1)^2)$. In this case $m=(n-1)^2$. The number of vertices of $\DS$ is the number of permutations which by Stirling's bound we have $n!\leq \exp( 1-n+ \log n (n + 1/2) ) \leq \exp( (n-1) ^{1.1})$. Hence the number of edges is bounded by $\exp(2(n-1)^{1.1})$, as required. \vspace{-5pt}
\end{proof}
\rv{	Lastly, Theorems \ref{thm:local_minima} and \ref{thm:permutations_in_prob}, can be generalized by considering $d$-dimensional faces of the polytope:  
\begin{theorem}
If the number of extreme points of the polytope $\hull{F}$ is bounded by $\exp(m^{1-\epsilon})$, for some fixed arbitrary $\epsilon>0$, and the Hessian of $E$ is drawn from the probability space $(\Omega_{m},P_r)$, the chance that a local minimum of $\min_{X\in\hull{\F}}E(X)$ is in the relative interior of a $d$-dimensional face of $\hull{\F}$ is extremely small, bounded by $exp(-c_1 d m)$, for some constant $c_1 > 0$. 
\end{theorem}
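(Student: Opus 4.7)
The plan is to follow the strategy of Theorem \ref{thm:local_minima}, upgrading the union bound from edges (one-dimensional faces) to $d$-dimensional faces. If a local minimum $X_*\in\hull{\F}$ lies in the relative interior of a $d$-dimensional face $f$ of $\hull{\F}$, then $X_*$ is a local minimum of $E$ restricted to $\aff{f}$, so the restricted Hessian must satisfy $M\vert_{\lin{f}}\succeq 0$. Call such an $f$ a \emph{critical} $d$-face; it suffices to show that, with high probability, no critical $d$-face exists.

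First I would count the $d$-dimensional faces of $\hull{\F}$. Each $d$-face $f$ is determined by its affine hull, and $\aff{f}$ is in turn determined by any $d+1$ affinely independent vertices of $f$ (and $f$ has at least $d+1$ such vertices since $\dim f=d$). Hence the number of $d$-faces is at most $\binom{V}{d+1}\leq V^{d+1}\leq \exp((d+1)m^{1-\epsilon})$, where $V\leq \exp(m^{1-\epsilon})$ is the bound on the number of extreme points.

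For a fixed $d$-face $f$, pick an orthonormal basis $B_f$ of $\lin{f}$. Writing $M=U\Lambda_{m,p}U^T$ with $U$ Haar-distributed on $O(m)$, the matrix $U^T B_f$ is uniformly distributed on the Stiefel manifold, so $P_r(M\vert_{\lin{f}}\succeq 0)$ equals the Grassmannian probability $P_r(M\vert_D\succeq 0)$ for a Haar-uniform $D\in G_r(d,m)$. Theorem \ref{thm:matrix_prob_concave}, optimized exactly as in the derivation of \eqref{e:Pr_example} (the factor $d$ in the exponent carries through unchanged), then bounds this probability by $\eta^{dm/2}$, where $\eta\in[0,1)$ depends only on $p,a,b$ and not on $d$.

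Letting $X$ count critical $d$-faces, linearity of expectation and the two bounds above yield
\[
\EE(X)\leq \exp((d+1)m^{1-\epsilon})\cdot \eta^{dm/2} = \exp\!\bigl((d+1)m^{1-\epsilon}+\tfrac{dm}{2}\log\eta\bigr).
\]
Since $\log\eta<0$ and $\epsilon>0$, for all $m$ above a fixed threshold (not depending on $d$) the second summand dominates the first uniformly in $d\geq 1$, because $(d+1)/d$ is bounded and $m^{-\epsilon}\to 0$; this gives $\EE(X)\leq \exp(-c_1 dm)$ for some $c_1>0$. Markov's inequality, $P_r(X\geq 1)\leq \EE(X)$, then concludes the proof, since the event that a local minimum lies in the relative interior of some $d$-face is contained in $\{X\geq 1\}$. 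The main obstacle is keeping the constant $c_1$ uniform in $d$: this hinges on $\eta$ in Theorem \ref{thm:matrix_prob_concave} being $d$-independent (since $d$ appears only as an outer exponent of the Chernoff bound) and on the polynomial-in-$d$ counting overhead $(d+1)m^{1-\epsilon}$ being absorbed into the exponential-in-$dm$ probability decay.
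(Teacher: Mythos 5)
Your proposal is correct and follows exactly the route the paper indicates: it replaces the edge-indicator union bound of Theorem \ref{thm:local_minima} with indicators over $d$-dimensional faces, applies Theorem \ref{thm:matrix_prob_concave} to get the $\eta^{dm/2}$ per-face bound, and finishes with Markov's inequality. The paper only sketches this in one sentence, and you correctly supply the two details it leaves implicit — the $\binom{V}{d+1}\leq \exp((d+1)m^{1-\epsilon})$ count of $d$-faces via affinely independent vertex subsets, and the uniformity of $c_1$ in $d$.
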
}
This theorem is proved similarly to Theorem \ref{thm:local_minima} by considering indicator variables $X_\alpha$ for positive semidefinite $M\vert_{\lin{\alpha}}$ where $\alpha$ stands for a $d$-dimensional face in $\hull{\F}$. 
This generalized theorem has a practical implication: local minima are likely to be found on lower dimensional faces.}

\vspace{-5pt}
\section{Graph matching with one sided permutations} \vspace{-5pt}

In this section we examine an interesting and popular graph matching \eqref{e:gm} instance, where the matchings are the one-sided permutations, namely $\F=\set{X\in \set{0,1}^{\scriptscriptstyle n\times n_0} \ \vert \ X\one = \one}$. That is $\F$ are well-defined maps from graph $G_A$ with $n$ vertices to $G_B$ with $n_0$ vertices. This modeling is used in the template and partial matching cases. Unfortunately, in this case, standard graph matching energies $E(X)$ are not probably conditionally concave over $\lin{\F}$. Note that $\lin{\DS}\subsetneqq \lin{\F}$. 

We devise a variation of the Frank-Wolfe algorithm using a \emph{concave search} procedure. That is, in each iteration, instead of standard line search we subtract a convex energy from $E(X)$ that is constant on $\F$ until we find a descent step. This subtraction is a relaxation of the original problem \eqref{e:gm} in the sense it does not alter (up to a global constant) the energy values at $\F$. 

The algorithm is summarized in Algorithm \ref{alg:fwonesided} and is guaranteed to output a feasible solution in $\F$. The linear program in each iteration over $\hull{\F}$ has a simple closed form solution. Also, note that in the inner loop only $n$ different $\lambda$ values should be checked.  Details can be found in appendix \ref{appendix}.

\begin{algorithm}[H]
	\SetKwInOut{Input}{input}

	\Input{$X_0\in \hull{\F}$}
	\BlankLine
	\While{not converged}{
		\While{energy not reduced}{
		\emph{add concave energy} $M_{curr}=M-\lambda \Lambda$\;
		\emph{compute step:}  $X_1=\min_{X\in \hull{\F}} [X_0]^TM_{curr}[X] $\;		
		increase $\lambda$\;
	}
	\emph{Update current solution} $X_0=X_1$ and set $\lambda=0$\;

	}
	\caption{Frank-Wolfe with a concave search.}\label{alg:fwonesided}
\end{algorithm}	\vspace{-5pt}


	\section{Experiments}\vspace{-5pt}
%
%
%

\paragraph{Bound evaluation:} Table \ref{tab:probConcavityEval} evaluates the probability bound \eqref{e:bound} for Hessians $M\in\Real^{100^2\times 100^2}$ of $E_2(X)$ using affinities $A,B$ defined by functions of geodesic distances on surfaces. Functions that are conditionally negative definite or semi-definite in the Euclidean case: geodesic distances $d(x,y)$, its square $d(x,y)^2$, and multi-quadratic functions $(1+d(x,y)^2)^{\frac{1}{10}}$. Functions that are positive definite in the Euclidean case: $c_{30}(\norm{x}_2)=(1-\norm{x}_2)_+$, $c_{31}(\norm{x}_2)=(1-\norm{x}_2)^4_+(4\norm{x}_2+1)$ and $\exp(-\tau^2\|x\|_2^{2})$ (note that the last function was used in \cite{Vestner2017}).      
 We also provide the \emph{empirical} chance of sampling a convex direction. The results in the table are the mean over all the shape pairs (218) in the SHREC07 \citep{giorgi2007shape} shape matching benchmark with $n=100$. The empirical test was conducted using $10^6$ random directions sampled from an i.i.d.~Gaussian distribution. Note that $0$ in the table means numerical zero (below machine precision).

\begin{table}[h]
  \centering
  	\scriptsize
  \caption{Evaluation of probable conditional concavity for different functions of geodesics on $\lin{\DS}$.}
  \resizebox{\textwidth}{!}{
   	\begin{tabular}{lrrrrrr}
   	\toprule
   	& \multicolumn{1}{l}{Distance} & \multicolumn{1}{l}{Distance Squared} & \multicolumn{1}{l}{MultiQuadratic} & \multicolumn{1}{l}{$c_{30}$} & \multicolumn{1}{l}{$c_{31}$} & \multicolumn{1}{l}{Gaussian} \\
   	\midrule
   	Bound mean & 0 & 0.024  &  $7\cdot10^{-4}$  & 0 & 0 & 0 \\
   	Bound std    & 0& 0.021 &   $1.7\cdot10^{-3}$ & 0 &0 & 0 \\
   	Empirical mean & 0     & 0.003 & $7\cdot10^{-5}$ & 0     & 0     & 0 \\
   	Empirical std & 0     & 0.003 & $1.8\cdot 10^{-4}$  & 0     & 0     & 0 \\
   	\bottomrule
   \end{tabular}%
   
}\vspace{-10pt}
  \label{tab:probConcavityEval}
\end{table}%

\paragraph{Initialization:} Motivated by \citet{fischler1987random,kim2011blended} and due to the fast running time of the algorithms (\eg, $~150{msec}$ for $n=200$ with Algorithm \ref{alg:fw}, and $~16{sec}$ with Algorithm \ref{alg:fwonesided}, both on a single CPU) we sampled multiple initializations based on randomized $l$-pairs of vertices of graphs $G_A,G_B$ and choose the result corresponding to the best energy. In Algorithm \ref{alg:fw} we used the Auction algorithm \citep{bernard2016fast}, as in \cite{Vestner2017}. 
%

%

	%
\begin{table}[h!]
	\centering
	\scriptsize
	\caption{Comparison to "convex to concave" methods. The table shows the average and the std of the energy differences. Positive averages indicate our algorithm achieves lower energy on average. }
\resizebox{\textwidth}{!}{
\begin{tabular}{ccccccc}
		\toprule
		\multicolumn{1}{c}{} & \multicolumn{3}{c}{ModelNet10} & \multicolumn{3}{c}{SHREC07} \\
		\midrule
		\# points & 30    & 60    & 90    & 30    & 60    & 90 \\
		\midrule
		DSPP  & 5.0$\pm$ 5.3 & 9.8$\pm$ 10.8 & 14.468$\pm$ 19.8 & 1.3$\pm$ 2.3 & 9.5$\pm$ 9.5 & 26.2$\pm$ 24.3 \\
		PATH  & 101.4$\pm$53.9 & 512.3$\pm$198.4 & 1251.9$\pm$426.4 & 69.263$\pm$55.9 & 307.7$\pm$230.6 & 721.0$\pm$549.7 \\
		RANDOM & 197.9$\pm$35.2 & 865.3$\pm$122.1 & 1986.1$\pm$273.0 & 120.2$\pm$83.6 & 532.7$\pm$357.8 & 1230.7$\pm$817.6 \\
		\bottomrule
	\end{tabular}%
}\vspace{-10pt}
	\label{tab:DSPPPATHcomp}
\end{table}%

\paragraph{Comparison with convex-to-concave methods:} Table \ref{tab:DSPPPATHcomp} compares our method to \cite{zaslavskiy2009path,dym2017ds++} (PATH, DSPP accordingly). As mentioned in the introduction, these methods solve convex relaxations and then project its minimizer while deforming the energy towards concavity. Our method compares favorably in the task of matching point-clouds from the ModelNet10 dataset \citep{wu20153d} with Euclidean distances as affinities, and the SHREC07 dataset \citep{giorgi2007shape} with geodesic distances. We used $\F=\Pi_n$, and energy \eqref{e:E2}.  The table shows average and standard deviation of energy differences of the listed algorithms and ours; the average is taken over 50 random pairs of shapes. Note that positive averages mean our algorithm achieves lower energy on average; the difference to random energy values is given for scale.




\begin{figure}[t!]
	\centering
	\setlength\tabcolsep{0pt}
	\begin{tabular}{cc}
		\includegraphics[width=0.45\textwidth]{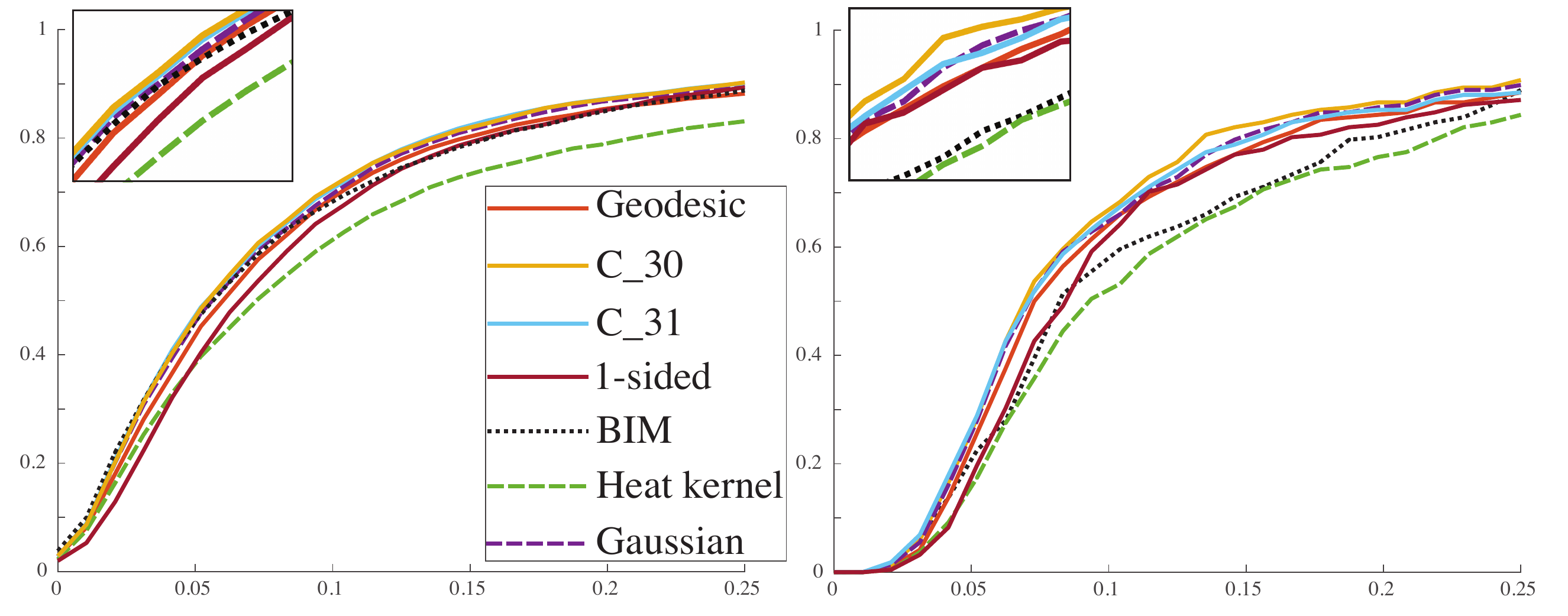} &
		\includegraphics[width=0.55\textwidth]{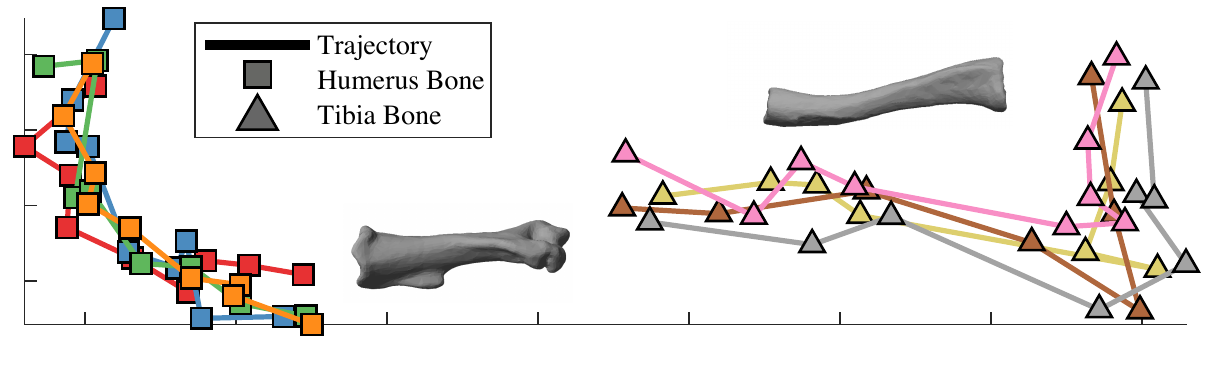}	\\ 
		(a) & (b)	\vspace{-5pt}		
	\end{tabular}		
	\caption{(a) SHREC07 benchmark: Cumulative distribution functions of all errors (left) and mean error per shape (right). (b) Anatomical dataset embedding \rv{in the plane}. Squares and triangles represent different bone types, lines represent temporal trajectories. \vspace{-10pt} }\label{f:shrec}
\end{figure}

\textbf{Automatic shape matching:} We use our Algorithm \ref{alg:fw} for automatic shape matching (\ie, with no user input or input shape features) on a the SHREC07 \citep{giorgi2007shape} dataset according to the protocol of \cite{kim2011blended}. This benchmark consists of matching 218 pairs of (often extremely) non-isometric shapes in 11 different classes such as humans, animals, planes, ants etc. On each shape, we sampled $k=8$ points using farthest point sampling and randomized $s=2000$ initializations of subsets of $l=3$ points. In this stage, we use $n=300$ points. We then up-sampled to $n=1500$ using the exact  algorithm with initialization using our $n=300$ best result. The process takes about $16min$ per pair running on a single CPU. 	
Figure \ref{f:shrec} (a) shows the cumulative distribution function of the geodesic matching errors (left - all errors, right - mean error per pair) of Algorithm \ref{alg:fw} with geodesic distances and their functions $c_{30},c_{31}$. We used \eqref{e:E2} and $\F=\Pi$. We also show the result of Algorithm \ref{alg:fwonesided} with geodesic distances, see details in appendix \ref{appendix}. We compare with Blended Intrinsic Maps (BIM) \citep{kim2011blended} and the energies suggested by \cite{boyarski2017efficient} (heat kernel) and \cite{Vestner2017} (Gaussian of geodesics). 
For the latter two, we used the same procedure as described above and just replaced the energies with the ones suggested in these works. Note that the Gaussian of geodesics energy of \cite{Vestner2017} falls into the probably concave framework.\\ 

\textbf{Anatomical  shape space analysis:} We match a dataset of 67 mice bone surfaces acquired using micro-CT. The dataset consists of eight time series. Each time series captures the development of one type of bone over time.  We use Algorithm \ref{alg:fw} to match all pairs in the dataset 
	\begin{wraptable}[10]{r}{0.25\columnwidth}	
	\vspace{-10pt}\hspace{2pt}
		\includegraphics[width=0.23\textwidth]{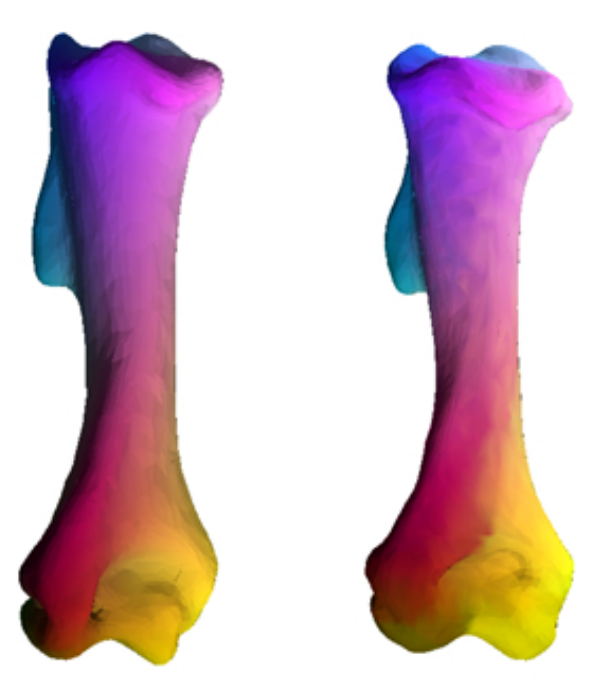}	
	\end{wraptable}
	using Euclidean distance affinity matrices $A,B$, energy \eqref{e:E2}, and $\F=\Pi_n$.
After optimization, we calculated a $67\times 67$ dissimilarity matrix. Dissimilarities are equivalent to our energy over the permutations (up to additive constant) and defined by $\sum_{ijkl} X_{ij}X_{kl} (d_{ik}-d_{jl})^2$. A color-coded matching example can be seen in the inset. In Figure \ref{f:shrec} (b) we used Multi-Dimensional Scaling (MDS) \citep{kruskal1978multidimensional} to assign a $2D$ coordinate to each surface using the dissimilarity matrix. Each bone is shown as a trajectory. Note how the embedding separated the two types of bones and all bones of the same type are mapped to similar time trajectories. This kind of visualization can help biologists analyze their data and possibly find interesting time periods in which bone growth is changing. Lastly, note that the Tibia bones (on the right) exhibit an interesting change in the midst of its growth. This particular time was also predicted by other means by the biologists. 

\section{Conclusion}

In this work, we analyze and generalize the idea of concave relaxations for
graph matching problems. We concentrate on \emph{conditionally concave} and
\emph{probably conditionally concave} energies and demonstrate that they
provide useful relaxations in practice. We prove that all local minima of
such relaxations are with high probability in the original feasible set;
this allows removing the standard post-process projection step in relaxation-based
algorithms. Another conclusion is that the set of optimal solutions of such relaxations coincides
with the set of optimal solutions of the original graph matching problem.

%
%
There are popular edge affinity matrices, such as $\set{0,1}$ adjacency matrices, that in general do not lead to conditionally concave relaxations. This raises the general question of characterizing more general classes of affinity matrices that furnish (probably) conditionally-concave relaxations. Another interesting future work could try to obtain information on the quality of local minima for more specific classes of graphs. 

\section{Acknowledgments}
\rv{The authors would like to thank Boaz Nadler, Omri Sarig, Vova Kim and Uri Bader for their helpful remarks and suggestions. This research was supported in part by the European Research Council (ERC Consolidator Grant, "LiftMatch" 771136) and the Israel Science Foundation (Grant No. 1830/17). The authors would also like to thank Tomer Stern and Eli Zelzer for the bone scans. 
}

	\bibliographystyle{apalike}
	\bibliography{bib}

\begin{appendices}
	
\section{Frank-Wolfe with concave search}\label{appendix}
An orthogonal basis to $\lin{\F}$ is computed similarly to Lemma 1 in the paper:

\begin{lemma} [orthonormal basis for one-sided permutations]\label{onesidedbasis}
	If the columns of $F\in\Real^{n_0\times (n_0-1)}$ form an orthonormal basis for $\one^\perp$ in $\Real^{n_0}$ then the columns of $F \otimes I_n$ are an orthonormal basis for $\lin{\F}$. 
\end{lemma}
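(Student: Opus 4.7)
The plan is to mirror the proof of Lemma \ref{lem:basis} closely, with the necessary adjustment that $\lin{\F}$ is now the larger linear space $\set{X\in\Real^{n\times n_0}\ \vert\ X\one = 0}$ (since $\F$ is characterized only by the row-sum condition $X\one=\one$, not a column-sum condition). So I would proceed in three steps: verify orthonormality of the columns, verify each column (after reshaping) lies in $\lin{\F}$, and finish by a dimension count.

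First, orthonormality. By the mixed-product property of Kronecker products,
\[
(F\otimes I_n)^T(F\otimes I_n) = (F^T F)\otimes (I_n^T I_n) = I_{n_0-1}\otimes I_n = I_{n(n_0-1)},
\]
so the $n(n_0-1)$ columns of $F\otimes I_n$ are orthonormal, and in particular linearly independent.

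Second, membership in $\lin{\F}$. A generic column of $F\otimes I_n$ has the form $F_j\otimes e_i$ where $F_j$ is the $j^{\text{th}}$ column of $F$ and $e_i$ is the $i^{\text{th}}$ standard basis vector in $\Real^n$. Using the column-stacking convention of the bracket operation together with the identity $[u v^T] = v\otimes u$, reshaping this column back into an $n\times n_0$ matrix yields
\[
X = ]F_j\otimes e_i[ = e_i F_j^T.
\]
Then $X\one = e_i (F_j^T\one) = 0$, since $F_j\in\one^\perp$ by assumption. Hence every column of $F\otimes I_n$ corresponds to a matrix in $\lin{\F}$.

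Third, the dimension count. The affine hull of $\F$ is exactly $\set{X\in\Real^{n\times n_0}\ \vert\ X\one=\one}$, so $\lin{\F} = \set{X\in\Real^{n\times n_0}\ \vert\ X\one=0}$ has dimension $n\cdot n_0 - n = n(n_0-1)$, which matches the number of orthonormal vectors in $\lin{\F}$ produced above. Therefore these vectors form an orthonormal basis of $\lin{\F}$.

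There is no substantive obstacle here; the only thing to be careful about is the column-stacking/reshaping convention, which must be consistent with what is used in Lemma \ref{lem:basis}. Once that is pinned down, each of the three steps is a one-line calculation, and the argument is essentially identical to the two-sided case modulo replacing one factor of $F$ by $I_n$ to reflect the absence of the column-sum constraint.
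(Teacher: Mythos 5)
Your proof is correct and follows exactly the route the paper intends: the paper gives no explicit proof of this lemma, stating only that it is ``computed similarly to Lemma~1,'' and your three steps (orthonormality via the mixed-product property, membership via reshaping $F_j\otimes e_i$ to $e_iF_j^T$ and checking $X\one=0$, and the dimension count $n(n_0-1)$) are precisely that adaptation. The reshaping convention you use is consistent with the one in the paper's proof of Lemma~1, so there is nothing to correct.
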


The energy $E_2(X)$ in this case does not model the matching problem well since it gives rise to trivial solutions. Instead, we chose to optimize a similar energy \citep{solomon2016entropic}: $E(X)=\sum_{ijkl}X_{ij}X_{kl}(A_{ik}-B_{jl})^2$. This energy can also be written in matrix form: $[X]^TM[X]$ where $M=-2B\tensor A+11^T\tensor A.^2+B.^2\tensor 11^T $ (where $C.^2$ implies entry-wise operation) and after restricting it to $\lin{\F}$ its Hessian is of the form $-2FBF\tensor A+ FB.^2F\tensor 11^T$. Assuming $A,B$ are Euclidean distance matrices, the right summand is negative semidefinite, but the left summand is not. This is because that $A$ is not conjugated by $F$: it has a large positive eigenvalue as a result of the Perron-Frobenius Theorem.

The linear program solved in each iteration of the algorithm takes a surprisingly simple form: it amounts to solving $\min_{X\in \hull{\F}} \tr(\nabla E(X_0)^T X)$ which can be solved simply by assigning the value 1 to the index of the minimal value in each row of $\nabla E(X_0)$. This procedure always outputs solutions in $\F$.

The convex energies we subtract from the objective during the concave search should be constant on $\F$ so a reduction in the subtracted energy is the same as in the original energy $E(X)$. We use the quadratic form defined by  $\lambda*\Lambda$ where $\Lambda$ is a $nn_0\times nn_0$ diagonal matrix defined by $D_{ijij} = \max_j \set{\sum_{kl} |M_{ijkl}|}$. $D$ is a positive definite matrix and for $\lambda=1$ , $W-D$ is guaranteed to be negative semidefinite. The values of $\lambda$ need not be discretized since there are only $n$ different critical values - the ones that change the minimum calculation mentioned in the previous paragraph.
\end{appendices}
\end{document}